% ------------------------------------------------------------------------
% bjourdoc.tex for birkjour.cls*******************************************
% ------------------------------------------------------------------------
%%%%%%%%%%%%%%%%%%%%%%%%%%%%%%%%%%%%%%%%%%%%%%%%%%%%%%%%%%%%%%%%%%%%%%%%%%

%\documentclass{birkjour}
 \documentclass[11pt]{amsart}

\usepackage[colorlinks, pagebackref, linkcolor=red, citecolor=blue]{hyperref}
%
%
% THEOREM Environments (Examples)-----------------------------------------
%

\numberwithin{equation}{section}
\theoremstyle{plain}
\newtheorem{theorem}{Theorem}[section]
\newtheorem{lemma}[theorem]{Lemma}

 {\theoremstyle{definition}}

%%%%%%%%%%%%%%%%%%%%%%%%%%%%%%%%%%%%%%%%%%%%%%%%

%\textwidth 165 true mm
%\textheight 230 true mm
%\oddsidemargin -3 true mm
%\evensidemargin -3 true mm
%\topmargin -5 true mm

%%%%%%%%%%%%%%%%%%%%%%%%%%%%%%%%%%%%%%%%%%%%%%%%%
\def\bs {\mathbf}

\def\R{ \mathbb R}

\def\D{{ \mathbb D}}
\def\C{{ \mathbb C}}

\def\N{{ \mathbb N}}

\def\dbar{\ov\partial}

\def\Inter{\bigcap }
\def\ov{\overline}

\def\ss{\subseteq}
\def\emp{\emptyset}

\def\buildrel#1_#2^#3{\mathrel{\mathop{\kern 0pt#1}\limits_{#2}^{#3}}}

\overfullrule=0pt

\begin{document}

%-------------------------------------------------------------------------
% editorial commands: to be inserted by the editorial office
%
%\firstpage{1} \volume{228} \Copyrightyear{2004} \DOI{003-0001}
%
%
%\seriesextra{Just an add-on}
%\seriesextraline{This is the Concrete Title of this Book\br H.E. R and S.T.C. W, Eds.}
%
% for journals:
%
%\firstpage{1}
%\issuenumber{1}
%\Volumeandyear{1 (2004)}
%\Copyrightyear{2004}
%\DOI{003-xxxx-y}
%\Signet
%\commby{inhouse}
%\submitted{March 14, 2003}
%\received{March 16, 2000}
%\revised{June 1, 2000}
%\accepted{July 22, 2000}
%
%
%
%---------------------------------------------------------------------------
%Insert here the title, affiliations and abstract:
%

\title[Cartan's Nullstellensatz]
 {A short proof of Cartan's Nullstellensatz for entire functions in $\C^n$}

%----------Author 1
\author{Raymond Mortini}

\address{%
Universit\'{e} de Lorraine\\
 D\'{e}partement de Math\'{e}matiques et  
Institut \'Elie Cartan de Lorraine,  UMR 7502\\
 Ile du Saulcy\\
 F-57045 Metz, France} 
 \email{raymond.mortini@univ-lorraine.fr}

%----------classification, keywords, date
\subjclass{Primary 32A15; Secondary 46J15}

\keywords{Entire functions; Cartan's Nullstellensatz; polydisk algebra; maximal ideals}

\date{June 1, 2015}
%----------additions
%\dedicatory{To my boss}
%%% ----------------------------------------------------------------------

\begin{abstract}
Using the fact that the maximal ideals in the polydisk algebra are given by the kernels of
point evaluations, we derive a simple formula that gives a solution to the B\'ezout equation
in the space of all entire functions of several complex variables. Thus 
a short and easy analytic proof of Cartan's Nullstellensatz is obtained.

\end{abstract}

%%% ----------------------------------------------------------------------
\maketitle
%%% ----------------------------------------------------------------------

\section{Introduction}

The aim of this note is to give a short and easy proof of Cartan's Nullstellensatz:
\begin{theorem}\label{cartan}
Let $H(\C^n)$ be  the space of  functions holomorphic in $\C^n$. Given $f_j\in H(\C^n)$,
the B\'ezout equation $\sum_{j=1}^N g_jf_j=1$ admits a solution $(g_1,\dots,g_N)\in H(\C^n)^N$ 
if and only if the functions $f_j$
have no common zero in $\C^n$.
\end{theorem}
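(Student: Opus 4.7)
The ``only if'' direction (existence of a Bezout solution implies no common zero) is immediate: evaluating $\sum g_j f_j = 1$ at any $z \in \C^n$ shows the $f_j(z)$ cannot all vanish.

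For the converse, my plan is to deploy the Banach-algebra input flagged in the abstract: the maximal ideal space of the polydisk algebra $A(\ov{\D}^n)$ is $\ov{\D}^n$ itself, every maximal ideal being the kernel of a point evaluation. Consequently, any finite family in $A(\ov{\D}^n)$ with no common zero on $\ov{\D}^n$ must generate the whole algebra (otherwise the generated proper ideal would sit inside the kernel of some $\varphi_w(h) = h(w)$, forcing a common zero at $w$). Applied via the rescaling $\tilde f_j(w) := f_j(Rw)$---which sends the given entire data to elements of $A(\ov{\D}^n)$ inheriting the no-common-zero hypothesis---this produces, on each closed polydisk $P_R := R\,\ov{\D}^n$, a Bezout solution $g_j^{(R)} \in A(P_R)$ with $\sum g_j^{(R)} f_j \equiv 1$ on $P_R$.

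The nontrivial remaining step is to upgrade this $R$-dependent family of merely continuous-up-to-the-boundary solutions into one pair of \emph{entire} solutions, and I expect essentially all the genuine content of the argument to live here. Since the abstract advertises a ``simple formula,'' I would first look for an explicit closed-form construction rather than a Mittag--Leffler-style patching: the natural candidate is a Cauchy-type integral of $g_j^{(R)}$ over the distinguished boundary of a sufficiently large polydisk $P_R$, packaged so that (i) the output agrees with the polydisk-algebra solution on some smaller $P_r$, and (ii) the integrand depends holomorphically on $z$ throughout $\C^n$, hence defines an entire function.

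The main obstacle to watch out for is that Bezout solutions in $A(P_R)$ are only determined modulo the syzygy module $\{(h_1,\dots,h_N) : \sum h_j f_j = 0\}$, so a naive limit $R \to \infty$ of the $g_j^{(R)}$ need not converge on any fixed compactum; different $R$'s give genuinely different representatives. The ``formula'' must therefore implicitly pick out, from each $A(P_R)$-coset of solutions, a canonical representative whose output extends holomorphically to all of $\C^n$. Getting this canonical choice right---so that the resulting $g_j$ is simultaneously entire and satisfies the Bezout equation everywhere---is where I would expect to spend the real effort.
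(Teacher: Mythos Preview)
Your setup is exactly right: the easy direction is trivial, and the Banach-algebra input (maximal ideals of $A(\ov{\D}^n)$ are point evaluations, hence zero-free families generate the unit ideal) correctly produces B\'ezout solutions $\bs g^{(R)}$ on each closed polydisk $P_R$. You also correctly isolate the real difficulty: the ambiguity modulo the syzygy module means the $\bs g^{(R)}$ need not cohere as $R$ grows.

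Where you go off track is the very next move. You explicitly discard a Mittag--Leffler-style patching in favor of a Cauchy-integral formula, but the paper does precisely the opposite: its ``simple formula'' \emph{is} a Mittag--Leffler telescoping series. Concretely, the paper writes down how any two solutions on a common polydisk differ, namely by $\bs f\cdot H$ with $H$ an antisymmetric $N\times N$ matrix (this is an elementary ring-theoretic lemma and is what resolves the syzygy ambiguity you flagged: antisymmetry forces $\bs f\cdot H\cdot\bs f^t=0$, so these corrections never disturb the B\'ezout identity). One then approximates each $H_k$ on $\bs D_{k+1}$ by an antisymmetric matrix $P_k$ of polynomials to within $2^{-k}$, and sets
\[
\bs g \;=\; \sum_{k\ge 0}\bigl(\bs a_{k+1}-\bs a_k-\bs f\cdot P_k\bigr).
\]
Telescoping the initial segment up to $m$ shows that on $\bs D_m$ this equals $\bs a_{m+1}-\bs f\cdot\sum_{k\le m}P_k$ plus a tail of holomorphic terms bounded by $2^{-m}$, so the sum is entire; antisymmetry of $H_k$ and $P_k$ then kills every correction term when paired against $\bs f^t$, leaving $\bs g\cdot\bs f^t=1$.

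Your proposed alternative also has an internal gap. A Cauchy integral of $g_j^{(R)}$ over the distinguished boundary of $P_R$ returns a function holomorphic only on the interior of $P_R$: the kernel $\prod_j(\zeta_j-z_j)^{-1}$ is singular once any $|z_j|$ reaches $R$, so condition~(ii) cannot hold as stated. There is no evident way to rig such an integral so that it simultaneously reproduces the local solution and extends entire; without a concrete mechanism this is a hope, not a construction. The missing idea is exactly the antisymmetric-matrix description of the syzygy module together with polynomial approximation---that is what turns the non-canonical local solutions into a convergent global series.
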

The usual proofs use a lot of machinery from sheaf theory, cohomology,  (see for example
\cite{kra}), or are based on the H\"ormander-Wolff method by solving higher order 
$\dbar$-equations using the Koszul complex, a tool  from
homological algebra (see  \cite[p. 128-131]{saw}). The one-dimensional case, 
first done by Wedderburn,
is very easy (see for example  \cite[p. 118-120]{rem} for the classical 
approach or \cite[p. 130]{an} for  the $\dbar$-approach).  For our proof to work,  we  shall only use 
 a standard fact from an introductory course to functional analysis, namely Gelfand's main theorem:
the  maximal ideals in a commutative unital  complex Banach algebra coincide  with the kernels of the
multiplicative linear functionals (see for instance \cite{rud}). 
The idea is to apply  this result to the polydisk algebras on an increasing sequence 
of polydisks $\bs D_k$ and to glue together the solutions
to the B\'ezout equations $\sum_{j=1}^N g_jf_j=1$ on $\bs D_k$ by using a Mittag-Leffler type trick.
The major hurdle to overcome was of course to find suitable summands that guarantee at the end
the holomorphy.

\section{The general solution to the B\'ezout equation}
Let $\D$ be the unit disk and 
let $A(\bs D^n)$ be the polydisk algebra; that is the algebra of those  continuous functions on the 
closed polydisk 
$$\bs D^n=\{(z_1,\dots,z_n)\in \C^n: |z_j|\leq 1\}$$
 which  are holomorphic in $\D^n$. Endowed with
the supremum norm, $A(\bs D^n)$ becomes a uniform algebra and coincides with the closure on
 $\bs D^n$ of the polynomials in $\C[z_1,\dots,z_n]$.  We actually only need 
 that $A(\bs D^n)$ is  the uniform algebra generated by the coordinate functions 
 $Z_j$, $j=1,\dots,n$ on $\bs D^n$. It is now straightforward to show that
 an ideal $I$ in $A(\bs D^n)$ is maximal if and only if it coincides with
 $M(a_0)=\{f\in A(\bs D^n): f(a_0)=0\}$ for some $a_0\in \bs D^n$
 (just take a character $m$ on $A(\bs D^n)$ and put $a_0=(m(Z_1),\dots, m(Z_n))$).
 Hence the B\'ezout equation $\sum_{j=1}^N x_jf_j$ in $A(\bs D^n)$ has a solution if and only
 if $\Inter_{j=1}^N Z_{\bs D^n}(f_j)=\emp$, where $Z_{\bs D^n}(f)$ is the zero set of $f$ 
 on $\bs D^n$.
 We will use the following well-known elementary result. For the reader's convenience we reproduce
 the proof here (see \cite{mowi}),
  because its understanding  is fundamental  for our  construction.
 
  \begin{lemma}\label{solbez}
Let $R$ be a commutative unital ring. 
Suppose that $\bs a=(a_1,\dots,a_N)$ is an invertible $N$-tuple in $R^N$
 and let $\bs x=(x_1,\dots,x_N)$ satisfy
  $\sum_{j=1}^N x_j a_j=\bs 1$; that is $\bs x\,\bs a^t=\bs 1$.
 Then every other representation $\bs 1=\sum_{j=1}^N y_ja_j $ of $\bs 1$ can be deduced from
 the former by letting $\bs y=\bs x+ \bs a H$, where $H$ is an antisymmetric 
 $(N\times N)-$matrix over $R$;
 that is $H=-H^t$, where $H^t$ is the transpose of $H$.
\end{lemma}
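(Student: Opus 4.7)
The plan is to reduce the statement to showing that every relation $\sum_{j=1}^N z_j a_j = 0$ (a ``syzygy'' of the invertible tuple $\bs a$) has the form $\bs z = \bs a H$ for some antisymmetric $H$. Indeed, if $\bs x \bs a^t = \bs 1 = \bs y \bs a^t$, then $\bs z := \bs y - \bs x$ satisfies $\bs z \bs a^t = \bs 0$, and once we produce an antisymmetric $H$ with $\bs z = \bs a H$, we immediately obtain $\bs y = \bs x + \bs a H$ as required.

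The key trick is to exploit the already-given representation $\sum_i x_i a_i = 1$ to ``split'' $z_j$ as $z_j = z_j \cdot 1 = z_j \sum_i x_i a_i$, then symmetrize against the vanishing sum $\sum_i z_i a_i = 0$. Concretely, I would define
\[
H_{ij} := x_i z_j - x_j z_i \qquad (i,j = 1,\dots,N),
\]
which is manifestly antisymmetric since $H_{ji} = x_j z_i - x_i z_j = -H_{ij}$. A direct computation then gives
\[
(\bs a H)_j = \sum_{i=1}^N a_i H_{ij} = \sum_{i=1}^N a_i (x_i z_j - x_j z_i) = z_j \sum_{i=1}^N a_i x_i - x_j \sum_{i=1}^N a_i z_i = z_j \cdot 1 - x_j \cdot 0 = z_j,
\]
so that $\bs a H = \bs z$, as desired. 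Equivalently, one may rewrite $H_{ij} = x_i y_j - x_j y_i$, which is perhaps a more symmetric expression in $\bs x$ and $\bs y$.

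There is essentially no obstacle here beyond guessing the right form of $H$; the construction is a baby case of the Koszul complex and relies only on the ring axioms, so no assumption on $R$ beyond commutativity and the existence of the unit is needed. The proof therefore consists of the two lines above plus the one-line reduction via $\bs z = \bs y - \bs x$.
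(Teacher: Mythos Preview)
Your proof is correct and is essentially the paper's own argument: the paper also produces the antisymmetric matrix with entries $H_{ij}=x_i y_j - x_j y_i$ (obtained there by multiplying $\bs x\,\bs a^t=\bs 1$ by $y_k$ and $\bs y\,\bs a^t=\bs 1$ by $x_k$ and subtracting), which is exactly your $H_{ij}=x_i z_j-x_j z_i$ after substituting $\bs z=\bs y-\bs x$. The only difference is that the paper additionally records the converse, namely that $(\bs a H)\,\bs a^t=0$ for any antisymmetric $H$, so that every $\bs x+\bs a H$ is again a solution; this is not literally part of the stated lemma, but it is used later in the proof of Theorem~\ref{cartan}, so you may want to add that one-line observation.
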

\begin{proof}
Suppose that  $\bs 1=\bs x \bs a^t$ and $\bs 1=\bs y \bs a^t$. 
For $k=1,\dots, N$, multiply the first equation   by $y_k$  and the second  by $x_k$.  
Then $$x_k-y_k= \sum_{j\not=k}a_j (y_jx_k-y_kx_j).$$
 Thus $\bs y=  \bs x+ \bs a H $ for some
antisymmetric matrix $H$.

To prove the converse, let   $\bs 1=\bs x \bs a^t$.  Since $H$ is antisymmetric we have
(due to the transitivity of matrix multiplication and $\bs x\bs y^t=\bs y \bs x^t$)
\begin{eqnarray*}
(\bs aH)  \bs a^t &=& \bs a (H\bs a^t)\;\;\,=\;\bs a ( \bs a H^t)^t\\
&=&\bs a (-\bs a H)^t=(-\bs aH) \bs a^t.
\end{eqnarray*}
Thus  $(\bs aH)  \bs a^t=0$.
Hence
$$\bs y\bs a^t= (\bs x+ \bs a H)  \bs a^t= \bs x  \bs a^t +
 (\bs a H)  \bs a^t=\bs 1+0=\bs 1.$$
\end{proof}

 \section{Proof of  Theorem \ref{cartan}}
 
 \begin{proof}
 Let $f_j\in H(\C^n)$ and  put  $\bs f=(f_1,\dots,f_N)$.
Suppose that $\Inter_{j=1}^N Z(f_j)=\emp$. For $k\in \N^*$, let
$\bs D_k=(k\,\bs D)^n$ be the closed polydisk
$$\bs D_k=\{(z_1,\dots,z_n)\in \C^n: |z_j|\leq k\}.$$
Note that $\bs D_k\ss \bs D_{k+1}^\circ$.
Let $\bs a_k\in A(\bs D_{k+1})^N$ be a solution to the B\'ezout equation $\bs a_k\cdot \bs f^t=1$ on $\bs D_{k+1}$.  Using Tietze's extension theorem \footnote{ Since we will consider a telescoping series
$\sum T_j$, where the domains of definition of the summands $T_j$ are strictly increasing, even an application of Tietze's theorem is not necessary.}, we may assume that the tuples $\bs a_k$
have been continuously extended to $\C^n$ \footnote{ Of course, outside $\bs D_k$ the B\'ezout equation
is not necessarily satisfied.}.
 By Lemma \ref{solbez}, there is an antisymmetric  matrix $H_k$ over $A(\bs D_{k+1})$ such that
$$ \bs a_{k+1}=\bs a_k + \bs f \cdot H_k.$$
Put $\bs a_0=\bs 0$ and $H_0=\bf O$.  For $k=0,1,\dots$, 
let $P_k$ be an antisymmetric $N\times N$-matrix of polynomials in $\C[z_1,\dots, z_n]$ such that
$$\max_{\bs D_{k+1}}||\bs f\cdot H_k-\bs f\cdot P_k||_N< 2^{-k}.$$
We claim that the $N$-tuple
$$\bs g:= \sum_{k=0}^\infty \bigl(\bs a_{k+1}-\bs a_{k}-\bs f\cdot P_k\bigr)$$
belongs to $H(\C^n)^N$ and is a solution to the B\'ezout equation $\bs g\cdot\bs f^t=1$ in $H(\C^n)$.
In fact, let $\bs D_m$  be fixed. Then the series defining $\bs g$ is uniformly convergent on $\bs D_m$
since
\begin{eqnarray*}
\bs g&=& \sum_{k=0}^m \bigl(\bs a_{k+1}-\bs a_{k}-\bs f\cdot P_k\bigr) +\sum_{k=m+1}^\infty
\bigl(\bs a_{k+1}-\bs a_{k}-\bs f\cdot P_k\bigr)\\
&=& \bs a_{m+1}- \bs f\cdot \bigl(\sum_{k=0}^m P_k\bigr)
+\sum_{k=m+1}^\infty\bigl(\bs a_{k+1}-\bs a_{k}-\bs f\cdot P_k\bigr)
\end{eqnarray*}
and the tail can be majorated on $\bs D_m$  by 
$$\sum_{k=m+1}^\infty \Vert\bs f\cdot H_k -\bs f \cdot P_k||_N<2^{-m}.$$
Moreover, on $\bs D_m$, $\bs a_{m+1}$ and all the summands in the series $\sum_{k=m+1}^\infty$ are holomorphic. Since $m$ was arbitrarily
 chosen, we conclude that  $\bs g\in H(\C^n)^N$. 
 Moving again to $\bs D_m$ we see that, due to the antisymmetry of the  matrices $H_k$ and $P_k$,
\begin{eqnarray*}
\bs g\cdot \bs f^t&=& \bs a_{m+1}\cdot \bs f^t - \bs f\cdot \bigl(\sum_{k=0}^m P_k\bigr)\cdot \bs f^t
+\sum_{k=m+1}^\infty \bs f \cdot (H_k-P_k)\cdot \bs f^t\\
&=&1-0+0=1.
\end{eqnarray*} 
\end{proof}

\bigskip

{\bf  Acknowledgements}

I thank the referee of the journal ``American Math. Monthly" for some useful comments
concerning the introductory section  of a previous version of the paper.

% ------------------------------------------------------------------------
\end{document}